\newcommand*\LyXZeroWidthSpace{\hspace{0pt}}
\numberwithin{equation}{section}
\numberwithin{figure}{section}
\theoremstyle{definition}
\newtheorem{definition}{Definition}[section]
\theoremstyle{remark}
\newtheorem{rem}[definition]{Remark}
\theoremstyle{plain}
\newtheorem{thm}{Theorem}
\newtheorem{prop}[definition]{Proposition}
\newtheorem{lem}[definition]{Lemma}
\newcommand{\vast}{\bBigg@{3}}
\newcommand{\Vast}{\bBigg@{4}}
\date{}
\begin{document}
\global\long\def\DR#1#2{{\rm DR}_{#1}\left(#2\right)}%
\global\long\def\DRR#1#2{{\rm DR}_{#1}^{1}\left(#2\right)}%

\title{Meromorphic differentials and twisted DR hierarchies for the Hodge CohFT }

\author{Xavier Blot}
\address{X.~B.: Korteweg-de Vriesinstituut voor Wiskunde, Universiteit van Amsterdam, Postbus 94248, 1090 GE Amsterdam, Nederland}
\email{x.j.c.v.blot@uva.nl}

\author{Paolo Rossi}
\address{P.~R.:Dipartimento di Matematica "Tullio Levi-Civita". 
Universit\`a degli Studi di Padova, 
Via Trieste 63,
35121 Padova, Italy.}
\email{paolo.rossi@math.unipd.it}

\author{Adrien Sauvaget}
\address{A.~S: Laboratoire AGM,
Universit\'e de Cergy Pontoise,
2 avenue Adolphe Chauvin,
95302 Cergy-Pontoise Cedex, France.}
\email{adrien.sauvaget@math.cnrs.fr}

\begin{abstract}
In \cite{blot2024meromorphic}, two families of classical and quantum integrable hierarchies associated to arbitrary Cohomological Field Theories (CohFTs) were introduced: the meromorphic differential and twisted double ramification hierarchies. For trivial CohFT, the authors established a connection with the untwisted Double Ramification (DR) hierarchy. In this paper, we extend this study to the Hodge CohFT and prove an analogous correspondence with the untwisted DR hierarchy. This yields non-trivial identities between Hodge integrals over the DR cycle, the twisted DR cycle and the cycle of meromorphic differentials.
\end{abstract}

\maketitle
\tableofcontents{}

\section{Introduction}

\subsection{Preliminaries}

\subsubsection{Strata of differentials, twisted and untwisted double ramification cycles.}

Let $g$ and $n$ be two nonnegative integers satisfying $2g-2+n>0$. We denote by $\mathcal{M}_{g,n}$ the moduli space of smooth curves of genus $g$ with $n$ marked points, and denote by $\overline{\mathcal{M}}_{g,n}$ the Deligne-Mumford compactification of $\mathcal{M}_{g,n}$. We index the $n$ marked points from $0$ to $n-1$. Let $m_{0},\dots,m_{n-1}$ be $n$ integers satisfying 
\[
\sum_{i=0}^{n-1}m_{i}=2g-2.
\]
Define $\mathcal{H}_{g}\left(m_{0},\dots,m_{n-1}\right)\subset\mathcal{M}_{g,n}$ as the locus of marked smooth curves $\left(C,x_{0},\dots,x_{n-1}\right)$ such that the canonical bundle satisfies $\omega_{C}\cong\mathcal{O}_{C}\left(\sum_{i=0}^{n-1}m_{i}x_{i}\right).$ We denote by
\[
\overline{\mathcal{H}}_{g}\left(m_{0},\dots,m_{n-1}\right)
\]
the closure of this locus in $\overline{\mathcal{M}}_{g,n}$, and refer to it as the\emph{ stratum of differentials}. If any $m_{i}$ is negative, this stratum is called \emph{meromorphic}, otherwise it is called \emph{holomorphic}. We use the same notation to refer to the associated homology class. Moreover, we denote by
\[
\DRR g{m_{0},\dots,m_{n-1}}
\]
the \emph{twisted double ramification cycle}.  defined as $2^{-g}{\rm P}_{g}^{g,1}\left(m_{0}+1,\dots,m_{n-1}+1\right)$, where ${\rm P}_{g}^{d,k}\in H^{2d}\left(\overline{\mathcal{M}}_{g,n}\right)$ is the Pixton class defined in \cite[Section 1.1]{JPPZ17}. By the results of~ \cite{Holmes,BHPSS2020pixton}, it may alternatively be defined via a resolution of the Abel--Jacobi map, or a cycle constructed the classes of strata and push-forwards along boundary components of $\overline{\mathcal{M}}_{g,n}$ associated with star-graphs. 

On the other hand, if $a_{0},\dots,a_{n-1}$ are integers satisfying
$
\sum_{i=0}^{n-1}a_{i}=0,
$
then we denote by
\[
\DR g{a_{0},\dots,a_{n-1}}
\]
 the (untwisted) \emph{double ramification cycle}. It is constructed either as the push forward of the virtual fundamental class of the moduli space of rubber maps to $\mathbb{P}^{1}$ \cite{JPPZ17}, or as in the twisted case by resolving the Abel-Jacobi map \cite{Holmes}, and it can be expressed by the Pixton class $2^{-g}{\rm P}_{g}^{g,0}\left(a_{0},\dots,a_{n-1}\right)$, see \cite{JPPZ17}. 

\medskip

\subsubsection{Previous work.}

In \cite{blot2024meromorphic}, the authors introduced two families of classical and quantum integrable hierarchies associated to arbitrary Cohomological Field Theories (CohFTs): the \emph{meromorphic differential hierarchy}, and the \emph{twisted double ramification hierarchy}. In both cases, the quantum Hamiltonian densities are defined via the same Hodge integrals, evaluated over strata of meromorphic differentials in the former, and over twisted DR cycles in the latter.

For the trivial CohFT, the two hierarchies are identified via a change of variables. Moreover, the meromorphic differential hierarchy coincides with the top degree part of the (untwisted) double ramification hierarchy \cite{Buryak15,BR2016}, yielding the well-known KdV hierarchy at the classical level, as well as a new quantization for it.  

\medskip

\subsubsection{Present work.}

In this paper, we extend the study of both hierarchies to the Hodge CohFT defined by
\[
\Lambda\left(\mu\right):=1+\mu\lambda_{1}+\cdots+\mu^{g}\lambda_{g},
\]
where $\lambda_{j}$ is the $j$th Chern class of the Hodge bundle and $\mu$ is a formal variable. 

We adopt all definitions and notations from \cite{blot2024meromorphic}, unless otherwise stated, and refer the reader to that work for the underlying constructions. Recall that for $i=0,\dots,n-1$, $\psi_{i}$ denotes the first Chern class of the line bundle over $\overline{\mathcal{M}}_{g,n}$ whose fiber at a marked curve is the cotangent line at its $i$-th marked point. Let $d\geq-1$, the quantum Hamiltonians densities of the hierarchy of meromorphic differentials for the Hodge CohFT are
\[
H_{d}(x)=\sum_{\substack{g,n\geq0\\
2g+n>0
}
}\!\frac{(i\hbar)^{g}}{n!} \! \sum_{m_{i}\in\mathbb{Z}} \!\left(\int_{\overline{\mathcal{H}}_{g}(-1,m_{1},\dots,m_{n},\,2g-1-\sum m_{i})}\!\psi_0^{d+1}\Lambda\!\left(\tfrac{-\epsilon^{2}}{i\hbar}\right)\Lambda(\mu)\right)q_{m_{1}}\cdots q_{m_{n}}(ix)^{\sum m_{i}-2g},
\]
while the quantum Hamiltonians densities of the hierarchy of twisted DR are defined by replacing the strata of meromorphic differentials with the corresponding twisted DR cycles: 
\[
H_{d}^{{\rm DR}^{1}}\!\!\left(x\right) = \!\!\sum_{\substack{g,n\geq0\\
2g+n>0
}
}\!\frac{(i\hbar)^{g}}{n!} \! \sum_{m_{i} \in\mathbb{Z}} \!\left(\int_{\DRR g{-1,m_{1},\dots,m_{n},\,2g-1-\sum m_{i}}}\!\psi_0^{d+1}\Lambda\!\left(\tfrac{-\epsilon^{2}}{i\hbar}\right)\Lambda(\mu)\right)q_{m_{1}}\cdots q_{m_{n}}(ix)^{\sum m_{i}-2g}.
\]
Recall that these quantum Hamiltonians are elements of
\[
\mathcal{B}[[\hbar]]=\mathbb{C}[q_{1},q_{2}\dots][[q_{0},q_{-1},\dots]][[ix,\left(ix\right)^{-1}]][[\epsilon,\hbar]].
\]
Let $\mathcal{A}^{{\rm sing}}[[\hbar]]=\mathbb{C}[[u_{0}]][u_{1},u_{2}.\dots;x^{-1}][[\epsilon,\hbar]]$ be the so-called ring of \emph{singular differential polynomials}. There is an injective map $\mathcal{A}^{{\rm sing}}[[\hbar]]\rightarrow\mathcal{B}[[\hbar]]$ defined on the generators by
\begin{equation}
u_{s}^{\alpha}\mapsto\sum_{m\in\mathbb{Z}}\underset{m^{\underline{s}}}{\underbrace{m\left(m-1\right)\cdots\left(m-s+1\right)}}q_{m}^{\alpha}i^{m}x^{m-s},\qquad\epsilon\mapsto\epsilon,\quad\hbar\mapsto\hbar.\label{eq: change variable q to u}
\end{equation}
The symbol $m^{\underline{s}}$ is called a \emph{falling factorial}. When the integrals are polynomials in $m_{1},\dots,m_{n}$, these Hamiltonian densities belong to the image of this map, and we interpret this map as a change of variables between $u$-variables and $q$-variables. We refer to \cite[Section 1]{blot2024meromorphic} for the definition of the star product, quantum bracket and the Poisson bracket in the classical setting.


\medskip

\subsection{Polynomiality of integrals over meromorphic strata\label{subsec: result polynomiality}}

\begin{prop}
\label{prop:polynomiality}Fix $g,n,d\geq0$ such that $2g-1+n>0$ and fix $0\leq l_{1},l_{2}\leq g$. The integral
\begin{equation}
\int_{\overline{\mathcal{H}}_{g}\left(2g-2-\sum m_{i},m_{1},\dots,m_{n}\right)}\psi_{0}^{d}\lambda_{l_{1}}\lambda_{l_{2}},
\end{equation}
such that $2g-2-\sum_{i=1}^{n}m_{i}<0$, is a polynomial in the variables $m_{1},\dots,m_{n}$ of degree $2g$. 
\end{prop}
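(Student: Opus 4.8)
The plan is to express the stratum of meromorphic differentials $\overline{\mathcal{H}}_{g}(m_{0},m_{1},\dots,m_{n})$, with $m_{0}=2g-2-\sum m_{i}<0$, in terms of a tautological formula whose dependence on the $m_{i}$ is manifestly polynomial, and then integrate against $\psi_{0}^{d}\lambda_{l_{1}}\lambda_{l_{2}}$. The natural tool is the conjecture of Farkas--Pandharipande, proved by Bae--Holmes--Pandharipande--Schmitt--Schwarz \cite{BHPSS2020pixton} (building on \cite{JPPZ17,Holmes}), which identifies the class of the (closure of the) stratum of twisted/meromorphic differentials with an explicit Pixton-type cycle. In the strictly meromorphic case (some $m_{i}<0$) the relevant class is, up to a power of $2$, the Pixton class ${\rm P}_{g}^{g,1}(m_{0}+1,\dots,m_{n}+1)$, i.e. precisely $2^{g}\,\DRR g{m_{0},\dots,m_{n}}$; more precisely one uses that the class of the meromorphic stratum equals a polynomial (in fact degree $2g$) combination of such classes together with correction terms supported on the boundary. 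The key structural input is that Pixton's class, before stability truncation, is given by a sum over stable graphs of explicitly polynomial expressions in the twists $m_{i}+1$ attached to half-edges, so each coefficient in front of a fixed tautological class is a polynomial in the $m_{i}$, and the whole expression is \emph{even} in the sense that only the parity-constrained monomials survive.

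First I would recall the closed formula: writing $a_{i}=m_{i}+1$ so that $\sum a_{i}=2g-2+n+1=2g-1+n$, the class $\DRR g{m_{0},\dots,m_{n}}=2^{-g}{\rm P}_{g}^{g,1}(a_{0},\dots,a_{n})$ is, by \cite[Section 1.1]{JPPZ17} and \cite{BHPSS2020pixton}, a sum over stable graphs $\Gamma$ of genus $g$ with $n+1$ legs, of terms polynomial in the $a_{i}$ (hence in the $m_{i}$) with coefficients that are tautological classes on the vertex moduli spaces, independent of the $m_{i}$. Crucially the construction starts from a polynomial class of cohomological degree $g$ in $2d+1$ formal variables and extracts the degree-$g$ part, so — holding $g$ fixed — the dependence on each $m_{i}$ is polynomial of degree at most $2g$. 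Second, I would invoke the Farkas--Pandharipande/BHPSS identity that the closure $\overline{\mathcal{H}}_{g}(m_{0},\dots,m_{n})$ of the meromorphic stratum, when $m_{0}<0$, is expressed as this same twisted DR cycle plus boundary corrections indexed by stable graphs, where each correction is again a polynomial in the $m_{i}$ of bounded degree with $m_{i}$-independent tautological coefficients. (In the purely meromorphic situation with exactly one negative entry, the correction terms vanish or collapse to the same polynomiality statement; in any case one only needs polynomiality, not the precise formula.) Third, I would integrate: since $\psi_{0}^{d}\lambda_{l_{1}}\lambda_{l_{2}}$ is a fixed tautological class independent of the $m_{i}$, pairing it against a class whose coefficients-of-tautological-classes are polynomials of degree $2g$ in $m_{1},\dots,m_{n}$ produces a number that is a polynomial of degree at most $2g$ in $m_{1},\dots,m_{n}$. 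One then checks the degree is exactly $2g$ by a leading-term computation — the top-degree part of the Pixton class is the DR polynomial, and its leading coefficient pairs nontrivially with $\psi_{0}^{d}\lambda_{l_{1}}\lambda_{l_{2}}$ using known Hodge integral evaluations (or the socle / $\lambda_{g}\lambda_{g-1}$-type vanishing and non-vanishing results, together with the string/dilaton equations to handle the $\psi_0^d$ factor).

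The main obstacle I anticipate is controlling the boundary correction terms that appear in passing from the \emph{closure} of the meromorphic stratum to the twisted DR cycle: one must verify that each such term, which a priori involves residue conditions and lower-genus/lower-$n$ data on the vertices of star graphs, still contributes a polynomial of degree $\leq 2g$ in the original $m_{i}$ and does not raise the degree. Here the relevant technical point is that the twists assigned to half-edges in these boundary strata are linear in the $m_{i}$, and the genus additivity $g=\sum_v g_v$ forces the total $m_i$-degree contributed across all vertices to stay bounded by $2g$; combined with the fact that $\lambda_{l_{1}}\lambda_{l_{2}}$ restricts to boundary strata via the splitting formula for the Hodge bundle (so that the $\lambda$-degree is also additive over vertices), one gets the degree bound uniformly. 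A secondary, more bookkeeping-type difficulty is the parity/evenness: one should observe that the polynomial in fact only involves monomials of the expected parity, so that ``degree $2g$'' is sharp and consistent with the degree count $\dim \overline{\mathcal{M}}_{g,n} = 3g-3+n$ versus the codimension $g$ of the DR cycle and degree $d+l_1+l_2$ of the insertion — but this consistency check is really just a sanity verification rather than part of the proof.
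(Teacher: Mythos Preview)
Your overall strategy matches the paper's: invoke Conjecture~A (proved in \cite{BHPSS2020pixton}) to write the twisted DR integral as the meromorphic-stratum integral plus star-graph corrections, then use polynomiality of $\DRR g{\cdots}$ (from \cite{spelier2024polynomiality,PixtonZagier}) and control the corrections. The gap is in how you handle those corrections. You propose that they are polynomial of degree $\leq 2g$ because ``twists on half-edges are linear in the $m_i$'' and genus additivity caps the total degree, but this reasoning is incomplete as stated: in Conjecture~A the satellite vertices are decorated with \emph{holomorphic} strata of differentials, not Pixton classes, and if a satellite carried some of the markings $m_{i_j}$ its contribution would be an integral over a holomorphic stratum with weights depending on those $m_{i_j}$---for which no polynomiality statement is available (indeed such a stratum is only defined when the weights are nonnegative, so the dependence cannot extend to a polynomial a priori). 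Your sentence ``the correction terms vanish or collapse to the same polynomiality statement'' is thus not justified without further argument.

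The paper closes this gap by exploiting the specific integrand $\psi_0^d\Lambda(\mu)\Lambda(\epsilon)$. After splitting the Hodge classes along the star graph, a satellite vertex of genus $g_i>0$ with $k$ half-edges carries a holomorphic stratum of dimension $2g_i-2+k$, while the $\lambda$-classes it can absorb have degree at most $2g_i-1$ (anything higher vanishes). This forces $k=1$: each satellite has a single edge and \emph{no} markings, contributing only the constant $(2g_i-1)\int_{\overline{\mathcal{H}}_{g_i}(2g_i-2)}\lambda_{g_i}\lambda_{g_i-1}$. All $m_i$-dependence then lives on the central vertex, which carries a meromorphic stratum of strictly smaller genus $g_0<g$ (with extra constant insertions $-2g_1,\dots,-2g_s$); polynomiality of degree $\leq 2g_0$ for that term is the induction hypothesis on $g$. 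This is precisely Lemma~\ref{lem:DR1-Mero-via-Conj A} followed by the short induction the paper records. You should replace your direct degree-count for the boundary terms with this dimension argument and make the induction on $g$ explicit.
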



As a consequence the quantum Hamiltonians of the hierarchy of meromorphic differentials can be rewritten as singular differential polynomials in terms of the $u$-variables:
\begin{align}
H_{d} & =\sum_{\substack{g,n\geq0\\
2g+n>0
}
}\frac{\left(i\hbar\right)^{g}}{n!}\sum_{\substack{s_{1},\dots,s_{n}\geq0\\
s_{1}+\cdots+s_{n}\leq2g
}
}\frac{\left(-1\right)^{g}}{x^{2g-\sum s_{i}}}\label{eq: Hd in u var}\\
 & \times\left[m_{1}^{\underline{s_{1}}}\cdots m_{n}^{\underline{s_{n}}}\right]\left(\int_{\overline{\mathcal{H}}_{g}\left(-1,m_{1},\dots,m_{n},2g-1-\sum m_{i}\right)}\psi_{0}^{d+1}\Lambda\left(\frac{-\epsilon^{2}}{i\hbar}\right)\Lambda\left(\mu\right)\right)u_{s_{1}}\cdots u_{s_{n}}.\nonumber 
\end{align}

\begin{rem}
\label{rem:CohFT-poly}
It will be apparent from the proof that this polynomiality property  holds when one Hodge class is replaced by any CohFT $(V,e_1,\eta,c_{g,n})$, where $V$ is a vector space, $e_1 \in V$ a unit, $\eta$ is a nondegenerate bilinear form and $c_{g,n}:V^{\otimes n}\rightarrow H^{*}\left(\overline{\mathcal{M}}_{g,n}\right)$, satisfying :
$$\int_{\overline{\mathcal{H}}_{g}\left(m_{1},\dots,m_{n}\right)}\lambda_{l}c_{g,n}\left(v_1,\dots,v_n\right)=0,\quad{\rm for}\,n\neq1, \text{ $m_1,\dots,m_n \geq 0,$ and $v_i \in V$. } $$
\end{rem}

\medskip

\subsection{Main result\label{subsec:Main-result}} We first formulate the main theorem of the present work is in terms of intersection numbers along strata of differentials and double ramification cycles. 
\begin{thm}
[main theorem]\label{thm: main theorem}Fix $g,n\geq0$ such that $2g+n>0$, fix $d\geq-1$ and fix $0\leq l_{1},l_{2}\leq g$. Let $\left(s_{1},\dots,s_{n}\right)$ be a list of nonnegative integers. The coefficient of $m_{1}^{\underline{s_{1}}}\cdots m_{n}^{\underline{s_{n}}}$ in the the polynomial
\begin{equation}
\int_{\overline{\mathcal{H}}_{g}\left(-1,m_{1},\dots,m_{n},2g-2-\sum m_{i}+1\right)}\psi_{0}^{d+1}\lambda_{l_{1}}\lambda_{l_{2}}\label{eq: Hodge integrals differentials}
\end{equation}
vanishes if $\sum_{i=1}^{2g}s_{i}\neq2g$, and when $\sum_{i=1}^{2g}s_{i}=2g$ it is given by the coefficient of $a_{1}^{s_{1}}\cdots a_{n}^{s_{n}}$ in the polynomial
\begin{equation}
\int_{\DR g{0,a_{1},\dots,a_{n},-\sum a_{i}}}\psi_{0}^{d+1}\lambda_{l_{1}}\lambda_{l_{2}}.
\end{equation}
In particular, $\int_{\overline{\mathcal{H}}_{g}\left(-1,m_{1},\dots,m_{n},2g-2-\sum m_{i}+1\right)}\psi_{0}^{d+1}\lambda_{l_{1}}\lambda_{l_{2}}$ is a homogeneous factorial polynomial of degree $2g$.
\end{thm}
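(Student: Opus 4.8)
The plan is to reduce the statement about meromorphic strata to a statement about double ramification cycles by pushing forward along the projection forgetting the $\psi_0$-marked point, and then to invoke the known structure of the twisted DR cycle together with Proposition~\ref{prop:polynomiality}. Concretely, write $\pi\colon\overline{\mathcal{M}}_{g,n+2}\to\overline{\mathcal{M}}_{g,n+1}$ for the map forgetting the point carrying the multiplicity $-1$ (or, depending on bookkeeping, carrying the insertion $\psi_0^{d+1}$ after one application of the string/dilaton-type relation). The first step is a geometric input: express the class $\overline{\mathcal{H}}_{g}(-1,m_1,\dots,m_n,2g-1-\sum m_i)$, or rather its $\pi_*$, in terms of the twisted DR cycle $\DRR g{m_1,\dots,m_n,2g-1-\sum m_i}$ — this is exactly the kind of comparison between strata of differentials and twisted DR cycles that underlies \cite{blot2024meromorphic}, and I would import it as a black box. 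The point of introducing the extra marking with multiplicity $-1$ is precisely that it makes the residue condition at that point force the class to land (after $\pi_*$) on the twisted DR cycle rather than on a genuinely larger stratum.

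The second step is to pass from the twisted DR cycle to the untwisted one. Here I would use the relation, due to \cite{JPPZ17} and its refinements, between $\mathrm{P}_g^{g,1}$ and $\mathrm{P}_g^{g,0}$: the twisted Pixton class with shifted arguments $m_i+1$ differs from the untwisted one with arguments $a_i$ by terms that are either lower-order in the polynomiality filtration or supported on boundary strata where the $\lambda_{l_1}\lambda_{l_2}$ insertion, combined with the vanishing in Remark~\ref{rem:CohFT-poly}, kills the contribution. The degree-$2g$ homogeneity is forced by dimension: $\lambda_{l_1}\lambda_{l_2}$ and $\psi_0^{d+1}$ have fixed codimension, the stratum has codimension $g$, and $\overline{\mathcal{M}}_{g,n+2}$ has dimension $3g-3+n+2$, so the residual freedom in the $m_i$ — equivalently the top-degree part in the $u$-variables via the change of variables \eqref{eq: change variable q to u} — is exactly a homogeneous piece of degree $2g$ in the falling-factorial basis; the vanishing for $\sum s_i\neq 2g$ is the content of the polynomiality statement of Proposition~\ref{prop:polynomiality} read off degree by degree. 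Matching the coefficient of $m_1^{\underline{s_1}}\cdots m_n^{\underline{s_n}}$ with that of $a_1^{s_1}\cdots a_n^{s_n}$ then amounts to observing that the change of variables \eqref{eq: change variable q to u} intertwines the falling-factorial basis on the meromorphic side with the monomial basis $a^s$ on the DR side, since the untwisted DR cycle is polynomial in the $a_i$ (again \cite{JPPZ17}) and its $u$-variable avatar is the top part of the double ramification hierarchy.

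The main obstacle I anticipate is the second step: controlling the difference between the twisted and untwisted Pixton classes after capping with $\psi_0^{d+1}\lambda_{l_1}\lambda_{l_2}$ and integrating. The shift $m_i\mapsto m_i+1$ is innocuous at the level of the leading polynomial, but the $r$-dependence (one sets $r$ large and takes the polynomial part) enters differently for the $k=0$ and $k=1$ versions of the Pixton class, and one must argue that all the discrepancy terms live on boundary divisors of compact type or on star-graph strata where the normalization carries a factor on which $\lambda_{l}$ restricts to a class annihilated by the hypothesis of Remark~\ref{rem:CohFT-poly} (for $n\neq 1$). Making this precise will likely require the explicit star-graph formula for $\DRR g{\cdot}$ from \cite{Holmes,BHPSS2020pixton} and a careful induction on the number of edges, with the genus-zero and one-pointed cases treated separately as the base. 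Everything else — the dimension count giving homogeneity of degree $2g$, and the translation between bases under \eqref{eq: change variable q to u} — is bookkeeping once that comparison is in hand.
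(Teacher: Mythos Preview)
Your proposal takes a fundamentally different route from the paper and, as written, has a genuine gap at exactly the place you flag.

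The paper does \emph{not} compare the meromorphic stratum to the untwisted DR cycle geometrically. Instead, it reformulates the theorem as the identity $H_d=(H_d^{\mathrm{DR}})^{[0]}$ of quantum Hamiltonian densities, passes to local functionals $\overline{G}_d$, and then invokes a \emph{uniqueness} lemma from \cite[Lemma 8.1]{blot2024meromorphic}: two families of commuting Hamiltonians with the same genus-zero part and the same $d=1$ Hamiltonian must coincide. Commutativity is imported wholesale from the quantum integrability of both hierarchies (\cite{blot2024meromorphic,BR2016}), and the entire content of the theorem is thereby collapsed to a single explicit identity, Proposition~\ref{prop: Nice identity}:
\[
\int_{\overline{\mathcal{H}}_{g}(-1,m,2g-1-m)}\psi_{0}\lambda_{g}\lambda_{g-1}=m^{\underline{2g}}\frac{|B_{2g}|}{(2g)!},
\]
which is then proved by a chain of manipulations (Conjecture~A, a splitting formula for $\mathrm{DR}^1$, Hain/Pixton on rational tails, Faber's $\lambda_g\lambda_{g-1}$ formula, and a Bernoulli-number identity).

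Your plan avoids integrability entirely and attempts a direct class-level comparison. Two concrete problems:

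\textbf{Step 1 is inverted.} The known relationship (Conjecture~A, Lemma~\ref{lem:DR1-Mero-via-Conj A}) expresses the \emph{twisted DR cycle} as a sum over star graphs decorated by \emph{strata}; the stratum appears as the trivial-graph term. There is no available statement expressing a single meromorphic stratum, or its push-forward under forgetting the simple pole, as a twisted DR cycle. The forgetful map $\pi$ does not do this job: forgetting the $-1$ marking does not produce $\mathrm{DR}^1$ on the nose.

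\textbf{Step 2 is the real gap.} You need to compare $\int_{\mathrm{DR}^1_g(\cdots)}\psi_0^{d+1}\lambda_{l_1}\lambda_{l_2}$ (weights summing to $2g-2$) with $\int_{\mathrm{DR}_g(\cdots)}\psi_0^{d+1}\lambda_{l_1}\lambda_{l_2}$ (weights summing to $0$), for \emph{all} $d$. There is no known direct identity between $\mathrm{P}_g^{g,1}$ and $\mathrm{P}_g^{g,0}$ that reduces to ``boundary terms killed by Hodge vanishing''. The hypothesis of Remark~\ref{rem:CohFT-poly} kills holomorphic-stratum contributions with $n\neq 1$, which is what makes Lemma~\ref{lem:DR1-Mero-via-Conj A} clean, but it does \emph{not} relate the $k=0$ and $k=1$ Pixton classes. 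Without such a comparison you cannot get the theorem for general $d$; the paper sidesteps this entirely by using integrability to reduce to $d=1$, where the comparison becomes the single computable identity above. Your dimension/degree bookkeeping is fine, but it only reproves Proposition~\ref{prop:polynomiality} (degree $\le 2g$), not the homogeneity (degree $=2g$) nor the matching of coefficients, both of which are the actual content.
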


\begin{rem}
Recall the elementary fact that a polynomial is uniquely written as a factorial polynomial (i.e. in terms of falling factorials) of the same degree, and conversely. In the statement above, the integrals (\ref{eq: Hodge integrals differentials}) are polynomial in the variable $m_{i}$ by Proposition~\ref{prop:polynomiality}. We can therefore rewrite this polynomial in the falling factorial basis and extract $m_{1}^{\underline{s_{1}}}\cdots m_{n}^{\underline{s_{n}}}$. 
\end{rem}

\begin{rem}
We expect the theorem to hold when one Hodge class is replaced by any CohFT satisfying the vanishing conditions stated in Remark~\ref{rem:CohFT-poly}.
\end{rem}

Theorem~\ref{thm: main theorem} is proved in Section~\ref{sec: proof main} using the following equivalent reformulation in terms of integrable hierarchy.

\subsection{Equivalent formulation in terms of integrable hierarchy\label{subsec:Equivalent-formulation-in}}

\subsubsection{Quantum hierarchies.}

The main theorem can be reformulated as
\begin{equation}
H_{d}=\left(H_{d}^{{\rm DR}}\right)^{\left[0\right]},\quad d\geq-1,\label{eq: main thm in terms of hamiltonians}
\end{equation}
where $H_{d}^{{\rm DR}}$ denotes the quantum Hamiltonian of the (non-twisted) DR hierarchy associated to the Hodge CohFT given by 
\begin{align}
H_{d}^{{\rm DR}} & =\sum_{\substack{g,n\geq0\\
2g+n>0
}
}\frac{\left(i\hbar\right)^{g}}{n!}\sum_{\substack{s_{1},\dots,s_{n}\geq0\\
s_{1}+\cdots+s_{n}\leq2g
}
}\left(-i\right)^{s_{1}+\cdots+s_{n}}\\
 & \times\left[a_{1}^{s_{1}}\cdots a_{n}^{s_{n}}\right]\left(\int_{\DR g{0,a_{1},\dots,a_{n},-\sum a_{i}}}\psi_{0}^{d+1}\Lambda\left(\frac{-\epsilon^{2}}{i\hbar}\right)\Lambda\left(\mu\right)\right)u_{s_{1}}\cdots u_{s_{n}}.\nonumber 
\end{align}
Here, the notation $[0]$ means extracting the degree $0$ in $H_{d}^{{\rm DR}}$, where the degree of a singular differential polynomial is determined by ${\rm deg}\,u_{i}^{\alpha}=i$, ${\rm deg}\,\epsilon=-1$, ${\rm deg}\,\frac{1}{x}=1$, and ${\rm deg}\,\hbar=-2$. 

Thus, all apparent negative powers of $x$ vanish in the expression of $H_{d}$ in Eq.~(\ref{eq: Hd in u var}) and, therefore, $H_{d}$ is an actual differential polynomial, that is $H_{d}\in\mathbb{C}[[u_{0}]][u_{>0}][[\epsilon,\hbar]]$.

In addition, since the highest degree of $H_{d}^{{\rm {DR}}}$ is zero and since the quantum bracket of the meromorphic differential hierarchy is the highest degree part of the quantum bracket of the DR hierarchy (see \cite[Remark 2.7]{blot2024meromorphic}), we conclude that, for the Hodge CohFT, the hierarchy of meromorphic differentials $(\left(H_{d}\right)_{d\geq0},\left[\cdot,\cdot\right])$ is the highest degree part of the DR hierarchy $(\left(H_{d}\right)_{d\geq0}^{{\rm DR}},\left[\cdot,\cdot\right]^{{\rm DR}})$.

\smallskip

\subsubsection{Classical hierarchies.}

In the classical limit $\left(\hbar=0\right)$, the classical Hamiltonian densities of the DR hierarchy $H_{d}^{{\rm DR}}\vert_{\hbar=0}$ are homogeneous of degree $0$. Therefore they equal the classical Hamiltonian densities of the hierarchy of meromorphic differential $H_{d}\vert_{\hbar=0}$. In addition, the Poisson bracket of the two hierarchies coincide, so the two classical hierarchies are identified.

In the case of the Hodge CohFT, Buryak computed the DR hierarchy and established that this hierarchy is equivalent to the Dubrovin-Zhang hierarchy which controls simple Hodge integrals \cite{Buryak15}. This is an instance of the DR/DZ equivalence proved in full generality in \cite{blot2024strong,blot2024master}. He also proved in \cite{Buryak_Hodge15} that the first equation of the deformed KdV hierarchy is the Intermediate Long Wave (ILW ) equation (up to a rescaling), and gave a precise relation with an infinite sequence of symmetries of the ILW equation discovered in \cite{satsuma1979internal}.

\medskip

\subsection{Relation between hierarchies of meromorphic differentials and twisted DR\label{subsec:Relation-between-hierarchies}}

The hierarchies of meromorphic differentials and twisted DR have the same quantum bracket (and therefore the same Poisson bracket in the classical setting). For the Hodge CohFT their quantum Hamiltonian densities are related by the following formula. 
\begin{prop}
\label{prop:Link-DR1-MD}Let $d\geq-1$. We have 
\[
H_{d}^{{\rm DR}^{1}}=H_{d}\bigg\vert_{u\left(x\right):=u\left(x\right)+\sum_{g\geq 1}\frac{c_{g}\left(\epsilon,\hbar,\mu\right)}{x^{2g}}},
\]
where
\[
c_{g}\left(\epsilon,\hbar,\mu\right)=\left(2g-1\right)\left(\epsilon^{2g}\mu^{g-1}-i\hbar\epsilon^{2g-2}\mu^{g}\right)\int_{\overline{\mathcal{H}}_{g}\left(2g-2\right)}\lambda_{g}\lambda_{g-1}.
\]
\end{prop}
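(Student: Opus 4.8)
The plan is to compare, Hodge-integral by Hodge-integral, the densities $H_d^{{\rm DR}^1}$ and $H_d$ by using the presentation of the twisted double ramification cycle $\DRR g{m_0,\dots,m_{n-1}}$ as a sum of boundary pushforwards of products of strata-of-differentials classes along \emph{star-graphs}, recalled in the introduction (see \cite{Holmes,BHPSS2020pixton}). In such a graph there is a central vertex of genus $g_0$ carrying the legs, together with outlying vertices of genera $g_1,\dots,g_k$ with $g_0+\cdots+g_k=g$, each joined to the center by a single edge; the central vertex is decorated by a meromorphic stratum with the original orders at its legs and an extra order $o_j$ at its $j$-th new half-edge, the $j$-th outlying vertex is decorated by $\overline{\mathcal{H}}_{g_j}$ with order $-o_j-2$ at its unique half-edge (the balancing condition for $1$-twisted canonical divisors), and each edge carries a numerical weight depending only on $o_j$. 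Substituting this into $\int_{\DRR g{-1,m_1,\dots,m_n,2g-1-\sum m_i}}\psi_0^{d+1}\Lambda(-\epsilon^2/i\hbar)\Lambda(\mu)$ and using that the Hodge classes are multiplicative at the (separating) edges of a star-graph, so that $\Lambda(t)_g$ restricts to $\Lambda(t)_{g_0}\boxtimes\Lambda(t)_{g_1}\boxtimes\cdots\boxtimes\Lambda(t)_{g_k}$ for $t=-\epsilon^2/i\hbar$ and for $t=\mu$, the integral factorizes as a genus-$g_0$ central integral times one integral per outlying vertex.

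The next step is to see that only a very restricted family of star-graphs contributes. Since $\psi_0^{d+1}$ sits on the central vertex, each outlying factor is an integral $\int_{\overline{\mathcal{H}}_{g_j}(\,\cdot\,)}\lambda_a\lambda_b$ with no $\psi$-insertion, where $\lambda_a\lambda_b$ runs over the bidegree components of $\Lambda(-\epsilon^2/i\hbar)\Lambda(\mu)$ restricted to genus $g_j$. If the outlying vertex carries any of the legs $1,\dots,n$, its contribution vanishes by the vanishing property of Remark~\ref{rem:CohFT-poly} applied to the Hodge CohFT; if it carries the leg $0$, then the $\psi_0^{d+1}$-insertion moves onto it and turns it into an integral of the form considered in Theorem~\ref{thm: main theorem} with no free variable, which vanishes for positive genus. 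Hence the surviving outlying vertices carry no original leg, so each is a one-pointed stratum whose half-edge order the balancing condition pins to $2g_j-2$ --- the minimal stratum $\overline{\mathcal{H}}_{g_j}(2g_j-2)$, forcing the central half-edge order $o_j=-2g_j$ --- and, since $\dim\overline{\mathcal{H}}_{g_j}(2g_j-2)=2g_j-1$, only the bidegree $\{a,b\}=\{g_j,g_j-1\}$ of $\Lambda(-\epsilon^2/i\hbar)\Lambda(\mu)$ survives, producing the factor $\int_{\overline{\mathcal{H}}_{g_j}(2g_j-2)}\lambda_{g_j}\lambda_{g_j-1}$ together with the scalar $(-\epsilon^2/i\hbar)^{g_j}\mu^{g_j-1}+(-\epsilon^2/i\hbar)^{g_j-1}\mu^{g_j}$ coming from the two ways of extracting $\lambda_{g_j}$ and $\lambda_{g_j-1}$.

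What is left for a surviving graph is a central integral $\int_{\overline{\mathcal{H}}_{g_0}(-1,m_1,\dots,m_n,-2g_1,\dots,-2g_k,2g-1-\sum m_i)}\psi_0^{d+1}\Lambda(-\epsilon^2/i\hbar)\Lambda(\mu)$, i.e. exactly a genus-$g_0$ Hodge integral of the type entering \eqref{eq: Hd in u var} in which $k$ of the free legs have been specialized to the orders $-2g_1,\dots,-2g_k$. Passing to the $u$-variables via \eqref{eq: change variable q to u}, a free leg still becomes a jet variable $u_s$, while specializing a free leg to a fixed order $-2g_j$ amounts, by the elementary identity $\sum_s\big([m^{\underline s}]P(m)\big)(-2g_j)^{\underline s}=P(-2g_j)$ for a polynomial $P$, to substituting $u_s\mapsto (-2g_j)^{\underline s}x^{-2g_j-s}=\partial_x^s(x^{-2g_j})$; combining this with the factors $(i\hbar)^{g_j}$ and $(ix)^{-2g_j}=(-1)^{g_j}x^{-2g_j}$ obtained by splitting the prefactors of $H_d^{{\rm DR}^1}$ along $g=g_0+\sum g_j$, with the scalar and the stratum integral from the outlying vertex, and with the edge weight (which one checks equals $2g_j-1$), turns each outlying vertex precisely into the substitution $u_s\mapsto u_s+\partial_x^s\big(c_{g_j}(\epsilon,\hbar,\mu)/x^{2g_j}\big)$ with $c_{g_j}$ as in the statement (all powers of $(-1)^{g_j}$ cancel). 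Summing over the number $k$ of outlying vertices, over their genera, and over which legs are specialized, the automorphism factor $1/|\mathrm{Aut}(\Gamma)|$ supplies exactly the symmetry factors of the multinomial expansion of the substitution on all legs at once, so the total equals $H_d\big\vert_{u(x):=u(x)+\sum_{g\geq1}c_g(\epsilon,\hbar,\mu)/x^{2g}}$. I expect the main obstacle to be the first ingredient --- extracting from the cited presentation of the twisted DR cycle, in the normalization compatible with $2^{-g}{\rm P}_g^{g,1}$, the numerical edge weights (responsible for the $2g-1$) and the absence of $\psi$-classes at the central half-edges --- after which the vanishing reduction is a direct application of Remark~\ref{rem:CohFT-poly} and Theorem~\ref{thm: main theorem}, and the resummation is bookkeeping of signs, of powers of $i,\epsilon,\hbar,x$, and of automorphism factors.
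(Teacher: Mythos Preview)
Your approach is essentially the paper's: both rest on the star-graph expansion of $\DRR g{\cdot}$ (Conjecture~A of \cite{farkas2018moduli}, proved in \cite{BHPSS2020pixton}), which the paper isolates as Lemma~\ref{lem:DR1-Mero-via-Conj A}, reduce by a dimension count to graphs whose satellite vertices are the one-pointed minimal strata $\overline{\mathcal{H}}_{g_j}(2g_j-2)$ with edge twist $2g_j-1$, and then reinterpret the extra orders $-2g_j$ appearing on the central vertex as the claimed shift of $u(x)$. The paper's execution is somewhat cleaner than yours: rather than the falling-factorial bookkeeping in $u$-variables, it stays in $q$-variables, where inserting an extra leg of fixed order $-2g$ is literally the operator $\partial/\partial q_{-2g}$, packages the sum over satellite configurations as $\exp\!\big(\sum_{g>0}c_g\,\partial/\partial q_{-2g}\big)H_d$, and only then passes to $u$-variables via $\partial/\partial q_{-2g}=\sum_{k\geq0}(\partial_x)^k(ix)^{-2g}\,\partial/\partial u_k$ together with the Taylor identity.

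Two small corrections to your write-up. First, the star graphs in Conjecture~A allow multiple edges to a given satellite and allow legs on satellites; your opening sentence already describes only the \emph{surviving} graphs. The reduction to single-edge, leg-free satellites is exactly the dimension count you invoke for legs (a second half-edge on a genus-$g_j$ satellite would force the integrand to be $\lambda_{g_j}^2$, which vanishes by Mumford's relation), so argue it rather than assume it. Second, the case ``leg $0$ on an outlying vertex'' cannot occur: leg $0$ carries order $-1$, and satellite vertices are decorated by holomorphic strata, so every negative-order leg lies on the central vertex automatically. You therefore do not need to invoke Theorem~\ref{thm: main theorem} here --- which is just as well, since in the paper's logical order the present proposition is established before the main theorem.
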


This statement is proved in Section~\ref{sec:Proof polynomiality and link DR1}. 

\subsection{A closed formula for Hodge integrals over meromorphic strata\label{subsec:A-closed-formula}}

The proof of the main theorem is reduced to the following nice identity. 
\begin{prop}
\label{prop: Nice identity}Let $g\geq1$. We have
\begin{equation}
\int_{\overline{\mathcal{H}}_{g}\left(-1,m,2g-1+m\right)}\psi_{0}\lambda_{g}\lambda_{g-1}=m^{\underline{2g}}\frac{\left|B_{2g}\right|}{\left(2g\right)!},
\end{equation}
where $B_{2g}$ is the $2g$-Bernoulli number.
\end{prop}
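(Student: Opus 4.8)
The plan is to compute the left-hand side by relating the meromorphic stratum $\overline{\mathcal{H}}_g(-1,m,2g-1+m)$ to a better-understood geometric object. The cleanest route is through the theory of the \emph{strata of differentials} as projectivized cones: the class $\overline{\mathcal{H}}_g(m_0,\dots,m_{n-1})$ can be expressed in terms of $\psi$-classes and the Hodge class via the formula of Sauvaget (and Bae--Holmes--Pandharipande--Schmitt--Schwarz / Chen--M\"oller--Sauvaget--Zagier), or alternatively via Pixton's formula using the identification of $\overline{\mathcal{H}}_g$ (suitably corrected by boundary terms) with a combination of DR-type cycles. However, since we only need the top Hodge class $\lambda_g\lambda_{g-1}$ paired against a single $\psi_0$, the socle/vanishing properties drastically simplify matters: $\lambda_g\lambda_{g-1}$ vanishes on the boundary strata that are products in a way incompatible with the $\lambda_g$-vanishing (Mumford's relation $\mathrm{ch}_{2g-1}(\mathbb{E})$ and the fact that $\lambda_g$ vanishes on $\delta_{\mathrm{irr}}$ and on any stratum where the curve degenerates to something of lower genus on a factor). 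This means only the main component of the stratum of differentials contributes, and the computation reduces to an integral over $\mathcal{M}_g$ (or its one- or two-pointed variants) of $\psi$-classes against $\lambda_g\lambda_{g-1}$.

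First I would use the dimension count: $\overline{\mathcal{H}}_g(-1,m,2g-1+m)$ has codimension $g-1$ in $\overline{\mathcal{M}}_{g,3}$ (the stratum of differentials with $n$ marked points and prescribed orders has codimension $g-1$ in the holomorphic case and $g$ in the strictly meromorphic... here with one negative order, codimension $g$), so $\dim \overline{\mathcal{H}}_g(-1,m,2g-1+m) = 3g-3+3-g = 2g$; then $\psi_0 \lambda_g \lambda_{g-1}$ has degree $1 + g + (g-1) = 2g$, so the integral makes sense. Next I would invoke the known evaluation $\int_{\overline{\mathcal{H}}_g(2g-2)}\lambda_g\lambda_{g-1} = \frac{|B_{2g}|}{2g \cdot (2g-2)!} \cdot \frac{1}{?}$ — more precisely, the Hodge integral $\int_{\overline{\mathcal{M}}_{g,1}}\psi_1^{2g-2}\lambda_g\lambda_{g-1} = \frac{|B_{2g}|}{2g(2g-2)!}\cdot\frac{1}{2^{2g-1}}$ or the closely related one-differential computation from \cite{Sauvaget}, which is exactly the kind of $\lambda_g\lambda_{g-1}$ Faber-type integral whose closed form is a Bernoulli number. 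The falling-factorial dependence $m^{\underline{2g}}$ on $m$ should emerge from the Abel--Jacobi / twisting structure: varying $m$ twists the relevant line bundle, and the contribution of this twist is polynomial in $m$ of degree exactly $2g$ with the stated leading (and only) term, by a residue or localization argument à la Pixton, combined with Proposition~\ref{prop:polynomiality} which already guarantees polynomiality of degree $2g$ so that only the $m^{\underline{2g}}$ coefficient survives.

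Concretely, the key steps in order: (1) establish the dimension count and that the integral is a polynomial in $m$ of degree $\le 2g$ (Proposition~\ref{prop:polynomiality}), hence determined by its top coefficient, i.e. the coefficient of $m^{\underline{2g}}$; (2) identify this top coefficient with a fixed intersection number on $\overline{\mathcal{M}}_{g,?}$ independent of $m$ — this is the heart of the matter and should follow from the structure of the Pixton/twisted-DR formula for $\overline{\mathcal{H}}_g$, where the $m$-dependence enters only through the piecewise-polynomial $\mathrm{P}_g$-series, whose degree-$2g$ part in $m$ is controlled by a single ``diagonal'' term; (3) use the $\lambda_g\lambda_{g-1}$-vanishing on boundary strata to kill all corrections and reduce to an integral of $\psi$-classes and $\lambda_g\lambda_{g-1}$ over a moduli space of curves (no differentials); (4) evaluate that integral using the known Faber-type formula, recognizing $\frac{|B_{2g}|}{(2g)!}$. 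The main obstacle will be step (2): pinning down \emph{exactly} which term in the (rather involved) formula for the class $[\overline{\mathcal{H}}_g(-1,m,2g-1+m)]$ produces the top-degree-in-$m$ contribution and checking that all other terms have strictly lower degree or vanish against $\lambda_g\lambda_{g-1}$. I expect this to require a careful analysis of the graph sum in Pixton's formula, using that each edge or leg decoration contributes at most linearly in $m$ and that achieving total degree $2g$ forces a unique ``star'' configuration concentrated at the marked point of order $m$.
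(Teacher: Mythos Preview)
Your proposal has a genuine gap in step~(1). You write that the integral ``is a polynomial in $m$ of degree $\le 2g$ \ldots\ hence determined by its top coefficient,'' but a polynomial of degree $\le 2g$ has $2g+1$ coefficients, not one. Nothing in Proposition~\ref{prop:polynomiality} tells you the lower-order terms vanish; that is precisely the content of the statement you are trying to prove (and ultimately of Theorem~\ref{thm: main theorem} itself, so invoking homogeneity would be circular). The paper closes this gap with an observation you miss entirely: for $0\le m\le 2g-1$ the profile $(-1,m,2g-1-m)$ has a \emph{unique simple pole}, so the residue theorem forces the stratum to be empty and the integral to vanish. These $2g$ obvious zeros, together with the degree bound, pin the polynomial down to $C\cdot m^{\underline{2g}}$, and the problem reduces to computing the single number $I_g=\int_{\overline{\mathcal{H}}_g(-1,2g,-1)}\psi_0\lambda_g\lambda_{g-1}$.

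Your steps~(2)--(4) sketch a different route to the constant: rather than evaluating at a specific $m$, you propose to extract the top-degree-in-$m$ term directly from a graph-sum formula for the stratum class and kill the rest using $\lambda_g\lambda_{g-1}$-vanishing. This could in principle work, but as written it is only an outline: you do not say which formula for $[\overline{\mathcal{H}}_g(-1,m,2g-1-m)]$ you intend (Sauvaget's, BHPSS, or the Conjecture~A/twisted~DR expression), nor do you carry out the degree count that isolates the ``unique star configuration.'' The paper instead evaluates at $m=2g$ and computes $I_g$ through a fairly long chain: Conjecture~A to pass to twisted DR, the Costantini--Sauvaget--Schmitt splitting formula, Pixton's formula restricted to rational-tails (where $\lambda_g\lambda_{g-1}$ survives), the Faber intersection number formula for $\lambda_g\lambda_{g-1}\psi$-integrals, and finally a nontrivial combinatorial identity involving $\coth$ and Bernoulli numbers. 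Your invocation of a ``Faber-type formula'' for the final Bernoulli number is on target, but you should be aware that reaching an integral to which it cleanly applies takes several nontrivial reductions, not just a single vanishing argument.
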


This statement is proved in Section~\ref{sec:nice-identity}. 

\medskip

\subsection{Acknowledgment}

The development of this project, in particular the main theorem, owes much to computer experiments performed using the admcycles package \cite{delecroix2022admcycles} to compute various intersection numbers with DR cycles and strata of differentials.

X.B. was supported by the Dutch Research Council grant OCENW.M.21.233.

P. R. is supported by the University of Padova and is affiliated to the INdAM group GNSAGA.

\subsection{Data availability and conflict of interest statement}

Data sharing is not applicable to this article as no datasets were generated or analyzed during the current study.

The authors declare that they have no conflict of interest.

\medskip

\section{Proof of Proposition~\ref{prop:polynomiality} and Proposition~\ref{prop:Link-DR1-MD} \label{sec:Proof polynomiality and link DR1}}

We show how Proposition~\ref{prop:polynomiality} and Proposition~\ref{prop:Link-DR1-MD} follow from the following lemma. 
\begin{lem}
\label{lem:DR1-Mero-via-Conj A}Fix $g,n,l\geq0,$ such that $2g+n>0,$ and $0\leq l\leq g$, and fix $d\geq0$. Let $m_{1},\dots,m_{n}\in\mathbb{Z}$ be integers satisfying the inequality $2g-2-\sum_{i=1}^{n}m_{i}<0$. Let $\mu$ and $\epsilon$ be formal variables, and define $\Lambda\left(\mu\right):=1+\mu\lambda_{1}+\cdots+\mu^{g}\lambda_{g}$, and similarly for $\Lambda\left(\epsilon\right)$. The following identity holds:
\begin{align}
\int_{{\rm DR}_{g}^{1}\left(2g-2-\sum_{i=1}^{n}m_{i},m_{1},\dots,m_{n}\right)}\psi_{0}^{d}\Lambda\left(\mu\right) & \Lambda\left(\epsilon\right)=\sum_{s\geq0}\sum_{\substack{g_{0}\geq0\\
g_{1},\dots,g_{s}>0\\
g_{0}+g_{1}+\cdots+g_{s}=g
}
}\frac{1}{s!}\label{eq:link-DR1-Diff-Conj-A}\\
 & \times\int_{\overline{\mathcal{H}}_{g_{0}}\left(2g-2-\sum_{i=1}^{n}m_{i},m_{1},\dots,m_{n}-2g_{1},\dots,-2g_{s}\right)}\psi_{0}^{d}\Lambda\left(\mu\right)\Lambda\left(\epsilon\right)\nonumber \\
 & \times\prod_{i=1}^{s}\left(2g_{i}-1\right)\int_{\overline{\mathcal{H}}_{g_{i}}\left(2g_{i}-2\right)}\lambda_{g}\lambda_{g-1}\left(\epsilon^{g_{i}}\mu^{g_{i}-1}+\mu^{g_{i}}\epsilon^{g_{i}-1}\right).\nonumber 
\end{align}
\end{lem}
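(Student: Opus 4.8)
The plan is to exploit the structure of the twisted DR cycle as a sum over star-graphs, following the formulation of \cite{BHPSS2020pixton,Holmes} recalled in the introduction. The starting point is the observation that the twisted double ramification cycle $\DRR g{2g-2-\sum m_i,m_1,\dots,m_n}$ differs from the stratum of differentials $\overline{\mathcal{H}}_g(2g-2-\sum m_i,m_1,\dots,m_n)$ precisely by contributions supported on boundary divisors where components of positive genus carrying a holomorphic differential with a single zero get bubbled off. More precisely, the comparison between $\DRR g{\cdot}$ and the meromorphic strata can be organized using the theory relating Pixton's twisted cycle to the (possibly disconnected) strata closure: the class $\DRR g{\cdot}$ decomposes as a sum, indexed by the number $s$ of bubbled-off components and their genera $g_1,\dots,g_s>0$, of push-forwards from $\overline{\mathcal{H}}_{g_0}(\dots,-2g_1,\dots,-2g_s)\times\prod_i\overline{\mathcal{H}}_{g_i}(2g_i-2)$ along the gluing maps attaching the $i$-th component at a point where it has a zero of order $2g_i$ (matched against a pole of order $-2g_i$ on the main component). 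This is the geometric heart of the argument and where I expect the main obstacle to lie: one must carefully justify that only these ``maximal'' bubbles appear and that the combinatorial factor is exactly $1/s!$, which should follow from the description of the twisted DR cycle via star-graphs together with a vanishing result ensuring that partially-bubbled or lower-order configurations do not contribute.

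Granting this decomposition of the cycle, the next step is to integrate $\psi_0^{d}\Lambda(\mu)\Lambda(\epsilon)$ against it. Since $\psi_0$ and the $\lambda$-classes pull back compatibly under the gluing maps — the Hodge bundle splits as a direct sum over the components of a nodal curve, so $\Lambda(\mu)$ restricted to the boundary stratum becomes $\Lambda(\mu)\boxtimes\prod_i\Lambda_{g_i}(\mu)$, and similarly for $\Lambda(\epsilon)$, while $\psi_0^{d}$ lives entirely on the main factor — the integral factors as a product over the main component and the bubbles. On each bubble factor $\overline{\mathcal{H}}_{g_i}(2g_i-2)$, which is a one-pointed holomorphic stratum, the only surviving term in $\Lambda(\mu)\Lambda(\epsilon)$ by dimension reasons is the one extracting $\lambda_g\lambda_{g-1}$ in degree $2g_i-1$, and accounting for the two ways of distributing the top classes between $\Lambda(\mu)$ and $\Lambda(\epsilon)$ yields the factor $\epsilon^{g_i}\mu^{g_i-1}+\mu^{g_i}\epsilon^{g_i-1}$ times $\int_{\overline{\mathcal{H}}_{g_i}(2g_i-2)}\lambda_g\lambda_{g-1}$. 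The factor $(2g_i-1)$ is precisely the multiplicity coming from the order of the zero being glued, i.e.\ the self-intersection / excess-intersection contribution of the node, exactly as in the analogous computation for the trivial CohFT in \cite{blot2024meromorphic}.

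Finally I would assemble the pieces: summing over $s\geq0$ and over ordered tuples $(g_0;g_1,\dots,g_s)$ with $g_0+\sum g_i=g$, with the $1/s!$ accounting for the unordered nature of the bubbles, reproduces exactly the right-hand side of \eqref{eq:link-DR1-Diff-Conj-A}. The hypothesis $2g-2-\sum m_i<0$ guarantees that the zeroth marked point carries a genuine pole, so that the meromorphic strata appearing on the right are all nonempty and the dimension bookkeeping is uniform; it also ensures we stay in the regime where the star-graph description of $\DRR g{\cdot}$ is the relevant one. The main work, to reiterate, is Step~1: pinning down the precise boundary decomposition of the twisted DR cycle in terms of meromorphic strata and maximal holomorphic bubbles, with the correct rational coefficients — everything downstream is a factorization and dimension count that parallels the trivial-CohFT case already treated in \cite{blot2024meromorphic}.
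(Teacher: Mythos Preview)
Your proposal is correct and follows essentially the same approach as the paper: both invoke the star-graph expression for the twisted DR cycle (what the paper calls Conjecture~A of \cite{farkas2018moduli}, proved in \cite{BHPSS2020pixton}), place the pole and hence $\psi_0^d$ on the central vertex, split the Hodge classes across components via the CohFT property, and use a dimension count on the satellite holomorphic strata to reduce each bubble to $\int_{\overline{\mathcal{H}}_{g_i}(2g_i-2)}\lambda_{g_i}\lambda_{g_i-1}$ with edge-twist factor $2g_i-1$. The ``vanishing result'' you flag as the main obstacle is exactly this dimension count, and the paper likewise points to \cite[Lemma~7.1]{blot2024meromorphic} as the model for the trivial-CohFT case.
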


\begin{proof}
This is a direct generalization of \cite[Lemma 7.1]{blot2024meromorphic} obtained by replacing the trivial CohFT with the Hodge CohFT. We briefly recall the argument. The key input is the so-called Conjecture A in the appendix of \cite{farkas2018moduli}, which has been proved in \cite{BHPSS2020pixton}. In \cite[Appendix A.4]{farkas2018moduli}, a class
\[
{\rm H}_{g}\left(m_{0},\dots,m_{n}\right)
\]
is defined and we refer to this source for its definition and notations. The statement of Conjecture A is
\[
\DRR g{m_{0},\dots,m_{n}}={\rm H}_{g}\left(m_{0},\dots,m_{n}\right)
\]
if there exists a negative integer in the list $m_{0},\dots,m_{n}$. Intersecting this equality, for $m_{0}\LyXZeroWidthSpace=2g-2-\sum_{i=1}^{n}m_{i}$, with $\psi_{0}^{d}\Lambda\left(\mu\right)\Lambda\left(\epsilon\right)$ and integrating yields 
\begin{equation}
\int_{{\rm DR}_{g}^{1}\left(2g-2-\sum_{i=1}^{n}m_{i},m_{1},\dots,m_{n}\right)}\psi_{0}^{d}\Lambda\left(\mu\right)\Lambda\left(\epsilon\right)=\int_{{\rm H}_{g}\left(2g-2-\sum_{i=1}^{n}m_{i},m_{1},\dots,m_{n}\right)}\psi_{0}^{d}\Lambda\left(\mu\right)\Lambda\left(\epsilon\right).\label{eq:Conj A evaluated}
\end{equation}
The class ${\rm H}_{g}\left(2g-2-\sum_{i=1}^{n}m_{i},m_{1},\dots,m_{n}\right)$ is a sum over weighted star graphs such that the central vertex is decorated with a meromorphic stratum and each satellite vertex is decorated with a holomorphic stratum. Therefore the pole with weight $2g-2-\sum_{i=1}^{n}m_{i}$ belongs to the central vertex, and in particular the class $\psi_0^{d}$ decorates this central vertex. Moreover, the full Hodge class, being a CohFT, splits naturally when intersected with a boundary stratum. A dimension count shows that the right hand side of Eq.~(\ref{eq:Conj A evaluated}) is nonzero only if each satellite vertex  is attached to the central vertex by a unique edge, and can only contribute as $\int_{\overline{\mathcal{H}}_{g_{i}}\left(2g_{i}-2\right)}\lambda_{g}\lambda_{g-1}$, where $g_{i}>0$ is the genus of the satellite vertex. In particular, the twist of this edge is $2g_{i}-1$. Assembling all contributions of ${\rm H}_{g}$ yields the desired formula.
\end{proof}
\begin{proof}
[Proof of Proposition~\ref{prop:polynomiality}] The polynomiality property of 
\[
\int_{\overline{\mathcal{H}}_{g}\left(2g-2-\sum m_{i},m_{1},\dots,m_{n}\right)}\psi_{0}^{d}\lambda_{l_{1}}\lambda_{l_{2}}
\]
follows from Lemma~\ref{lem:DR1-Mero-via-Conj A} via an induction argument based on the fact that $$\int_{{\rm DR}_{g}^{1}\left(2g-2-\sum_{i=1}^{n}m_{i},m_{1},\dots,m_{n}\right)}\psi_{0}^{d}\Lambda\left(\mu\right)\Lambda\left(\epsilon\right)$$ is a polynomial of degree $2g$ in $m_{1},\dots,m_{n}$ \cite{spelier2024polynomiality,PixtonZagier}. 
\end{proof}
\begin{proof}
[Proof of Proposition~\ref{prop:Link-DR1-MD}]We substitute $m_{n}=2g-1-\sum_{i=0}^{n-1}m_{i}$ and $\mu=\frac{-\epsilon^{2}}{i\hbar}$ in Eq.~(\ref{eq:link-DR1-Diff-Conj-A}) to obtain 
\begin{align*}
H_{d}^{{\rm DR}^{1}} 
 & =\exp\left(\sum_{g>0}\left(2g-1\right)\int_{\overline{\mathcal{H}}_{g}\left(2g-2\right)}\lambda_{g}\lambda_{g-1}\left(\left(-\epsilon^{2}\right)^{g}\mu^{g-1}+\mu^{g}\left(-\epsilon^{2}\right)^{g-1}i\hbar\right)\frac{\partial}{\partial q_{-2g}}\right)H_{d}.
\end{align*}
Moreover the operator $\frac{\partial}{\partial q_{-2g}}$ acting on singular differential polynomial is written in $u$-variables as $\frac{\partial}{\partial q_{-2g}}=\sum_{k\geq0}\left(\partial_{x}\right)^{k}\left(\frac{1}{ix}\right)^{2g}\frac{\partial}{\partial u_{k}}$. Therefore, using the power series identity $\exp\left(t\partial_{x}\right)f\left(x\right)=f\left(x+t\right)$ we get 
\[
H_{d}^{{\rm DR}^{1}}=H_{d}\Bigg\vert_{\substack{u_{0}\rightarrow u_{0}+\sum_{g>1}\frac{c_{g}\left(\epsilon,\hbar,\mu\right)}{x^{2g}}\\
\quad u_{1}\rightarrow u_{1}+\partial_{x}\sum_{g>1}\frac{c_{g}\left(\epsilon,\hbar,\mu\right)}{x^{2g}}\\
\quad\vdots
}
}=H_{d}\Bigg\vert_{u\left(x\right)\rightarrow u\left(x\right)+\sum_{g \geq 1}\frac{c_{g}\left(\epsilon,\hbar,\mu\right)}{x^{2g}}},
\]
where we used 
$
c_{g}\left(\epsilon,\hbar,\mu\right)=\left(2g-1\right)\left(\epsilon^{2g}\mu^{g-1}-i\hbar\mu^{g}\epsilon^{2g-2}\right)\int_{\overline{\mathcal{H}}_{g}\left(2g-2\right)}\lambda_{g}\lambda_{g-1}.
$
\end{proof}

\section{Proof of the main theorem\label{sec: proof main}}

The main theorem is equivalent to the equality
\begin{equation}
H_{d}=\left(H_{d}^{{\rm DR}}\right)^{\left[0\right]},\qquad d\geq-1,
\end{equation}
that we establish now. 

We briefly recall part of the notation introduced in \cite[Section 2]{blot2024meromorphic}, to which we refer for the complete framework. Given a function $f\in\mathcal{B}[[\hbar]]$, the associated local functional is defined by $\int f\,dx:=\mathrm{Coef}_{(ix)^{-1}}\left(f-f|_{q_{*}=0}\right),$ and we write $\overline{f}:=\int f\,dx$ as shorthand. The variational derivative of a local functional is given by $\frac{\delta\overline{f}}{\delta u}:=\sum_{m\in\mathbb{Z}}(ix)^{-m-1}\frac{\partial\overline{f}}{\partial q_{m}}.$ When $f$ is an element of $\mathcal{A}=\mathbb{C}[[u_{0}]][u_{1},u_{2}.\dots][[\epsilon,\hbar]]$, the local functional $\overline{f}$ is identified with the image of $f$ via the quotient map $\mathcal{A}\rightarrow\ensuremath{\mathcal{A}/({\rm Im}\,(\partial_{x}:\mathcal{A}\to\mathcal{A})\oplus\mathbb{C}[[\epsilon,\hbar]])}$, where $\partial_{x}=\sum_{i\geq0}u_{i+1}\partial/\partial u_{i}$. In addition, the variational derivative is written as $\frac{\delta\overline{f}}{\delta u}=\sum_{s\geq0}\left(-\partial_{x}\right)^{s}\frac{\partial f}{\partial u_{s}}$ in $u$-variables.

Let $d\geq1$, we introduce $G_{d}$ and $G_{d}^{{\rm DR}}$ by 
\begin{equation}
G_{d}=\sum_{\substack{g,n\geq0\\
2g-1+n>0
}
}\frac{\left(i\hbar\right)^{g}}{n!}\sum_{m_{1},\dots,m_{n}\in\mathbb{Z}}P_{g,d}\left(m_{1},\dots,m_{n}\right)q_{m_{1}}\cdots q_{m_{n}}\left(ix\right)^{\sum_{i=1}^{n}m_{i}-2g},
\end{equation}
where
\begin{equation}
P_{g,d}\left(m_{1},\dots,m_{n}\right)=\int_{\overline{\mathcal{H}}_{g}\left(2g-2-\sum m_{i},m_{1},\dots,m_{n}\right)}\psi_{0}^{d}\Lambda\left(\frac{-\epsilon^{2}}{i\hbar}\right)\Lambda\left(\mu\right),
\end{equation}
and
\begin{equation}
G_{d}^{{\rm DR}}=\sum_{\substack{g,n\geq0\\
2g-1+n>0
}
}\frac{\left(i\hbar\right)^{g}}{n!}\sum_{s_{1},\dots,s_{n}\geq0}\left[a_{1}^{s_{1}}\cdots a_{n}^{s_{n}}\right]\left(\int_{\DR g{-\sum_{i=1}^{n}a_{i},a_{1},\dots,a_{n}}}\psi_{0}^{d}\Lambda\left(\frac{-\epsilon^{2}}{i\hbar}\right)\Lambda\left(\mu\right)\right)u_{s_{1}}\cdots u_{s_{n}}.
\end{equation}
They satisfy
\begin{equation}
\overline{G}_{d}=\overline{H}_{d},\quad\frac{\delta\overline{G}_{d}}{\delta u}=H_{d-1},
\end{equation}
and similarly
\begin{equation}
\overline{G_{d}^{{\rm DR}}}=\overline{H_{d}^{{\rm DR}}},\quad\frac{\delta\overline{G_{d}^{{\rm DR}}}}{\delta u}=H_{d-1}^{{\rm DR}}.
\end{equation}

We prove the identity
\begin{equation}
\overline{G}_{d}=\left(\overline{G_{d}^{{\rm DR}}}\right)^{\left[0\right]},\qquad d\geq0,\label{eq:equalitu of G gamilto}
\end{equation}
which implies the desired equality after applying the variational derivative. Using \cite[Lemma 8.1]{blot2024meromorphic}, Eq.~(\ref{eq:equalitu of G gamilto}) reduces to verifying the following three statements:
\begin{enumerate}
\item Matching genus-zero terms:
\begin{equation}
{\rm Coef}_{\epsilon^{0}\hbar^{0}}\overline{G}_{d}={\rm Coef}_{\epsilon^{0}\hbar^{0}}\left(\overline{G_{d}^{DR}}\right)^{\left[0\right]}=\frac{u^{d+2}}{\left(d+2\right)!},\quad d\geq0,
\end{equation}
where ${\rm Coef}_{\epsilon^{0}\hbar^{0}}$ stands for extracting the coefficient of $\epsilon^{0}\hbar^{0}$. 
\item Commutativity of Hamiltonians:
\begin{equation}
\left[\overline{G_{d}},\overline{G_{1}}\right]=0=\left[\left(\overline{G_{d}^{DR}}\right)^{\left[0\right]},\left(\overline{G_{1}^{DR}}\right)^{\left[0\right]}\right],\quad d\geq0,
\end{equation}
where the bracket corresponds to the commutator of the star product introduced in \cite[Section 2]{blot2024meromorphic}. 
\item Matching of the first Hamiltonian:
\begin{equation}
\overline{G}_{1}=\left(\overline{G_{1}^{{\rm DR}}}\right)^{\left[0\right]}=\int\left(\frac{u^{3}}{3!}+O\left(\epsilon,\hbar,\mu\right)\right).
\end{equation}
\end{enumerate}

\subsubsection*{Item $1$}

follows from a direct computation, recalling that in genus $0$ the codimension of the strata of meromorphic differentials and of the DR cycle is $0$.

\subsubsection*{Item $2$}

follows from the quantum integrability of the hierarchy of meromorphic differentials \cite{blot2024meromorphic} and of the DR hierarchy \cite{BR2016}. 

\subsubsection*{Item $3$}

requires more explanations. First, observe that in both cases the integrals involved are dimensionally nontrivial only when
\[
3=\left(g-l\right)+\left(g-k\right)+n,
\]
where $0\leq l,k\leq g$ are the degrees of the two Hodge classes $\lambda_{l}$ and $\lambda_{k}$. The equality 
\begin{equation}
\int_{\DR g{0,-a,a}}\psi_{0}\lambda_{g}\lambda_{g-1}=a^{2g}\frac{\left|B_{2g}\right|}{\left(2g\right)!}
\end{equation}
can be found in \cite[Section 4.3.2]{Buryak15}. This yields the explicit formula: 
\begin{align}
\overline{G_{1}^{{\rm DR}}} & =\int\left(\frac{u_{0}^{3}}{3!}+\sum_{g\geq1}\epsilon^{2g}\mu^{g-1}\frac{\left|B_{2g}\right|}{2\left(2g\right)!}u_{2g}u_{0}-\sum_{g\geq1}i\hbar\epsilon^{2\left(g-1\right)}\mu^{g}\frac{\left|B_{2g}\right|}{2\left(2g\right)!}u_{2g}u_{0}\right).
\end{align}
On the other hand, using Proposition~\ref{prop: Nice identity}, proven in the next section, we have 
\begin{equation}
\int_{\overline{\mathcal{H}}_{g}\left(-1,m,2g-1+m\right)}\psi_{0}\lambda_{g}\lambda_{g-1}=m^{\underline{2g}}\frac{\left|B_{2g}\right|}{\left(2g\right)!}.
\end{equation}
Therefore we recover the same expression
\begin{equation}
\overline{G}_{1}=\int\left(\frac{u_{0}^{3}}{3!}+\sum_{g\geq1}\epsilon^{2g}\mu^{g-1}\frac{\left|B_{2g}\right|}{2\left(2g\right)!}u_{2g}u_{0}-\sum_{g\geq1}i\hbar\epsilon^{2\left(g-1\right)}\mu^{g}\frac{\left|B_{2g}\right|}{2\left(2g\right)!}u_{2g}u_{0}\right),
\end{equation}
which concludes the proof.
\medskip

\section{Proof of the closed formula for Hodge integrals over meromorphic strata\label{sec:nice-identity}}

We prove the identity of Proposition~\ref{prop: Nice identity}: whenever $g\geq1$, we have
\[
\int_{\overline{\mathcal{H}}_{g}\left(-1,m,2g-1+m\right)}\psi_{0}\lambda_{g}\lambda_{g-1}=m^{\underline{2g}}\frac{\left|B_{2g}\right|}{\left(2g\right)!}.
\]

We have already established that $\int_{\overline{\mathcal{H}}_{g}\left(-1,m,2g-1-m\right)}\psi_{0}\lambda_{g}\lambda_{g-1}$ is a polynomial of degree $2g$. Moreover, this polynomial has $2g$ obvious zeros:
\begin{equation}
\int_{\overline{\mathcal{H}}_{g}\left(-1,m,2g-1-m\right)}\psi_{0}\lambda_{g}\lambda_{g-1}=0\quad{\rm for}\quad0\leq m\leq2g-1,
\end{equation}
since each case has a unique pole which is simple, therefore the stratum of differentials is empty by the residue condition. By interpolation we deduce that 
\begin{equation}
\int_{\overline{\mathcal{H}}_{g}\left(-1,m,2g-1-m\right)}\psi_{0}\lambda_{g}\lambda_{g-1}=C\times m^{\underline{2g}},
\end{equation}
for some constant $C$. Thus, proving Proposition~\ref{prop: Nice identity} amounts to prove the following formula 
\begin{equation}
I_{g}:=\int_{\overline{\mathcal{H}}_{g}\left(-1,2g,-1\right)}\psi_{0}\lambda_{g}\lambda_{g-1}=\left|B_{2g}\right|,
\end{equation}
which we shall now demonstrate.

\subsection{Step 1: Application of Conjecture A}

We first compute $\int_{\DRR g{-1,2g,-1}}\psi_{0}\lambda_{g}\lambda_{g-1}$using the expression of the twisted DR cycle as a sum over star graphs decorated with cycles of differentials conjectured in \cite[Conjecture A]{farkas2018moduli}, and proved in \cite{BHPSS2020pixton}. We refer to these sources for its formulation. The class $\lambda_{g}\lambda_{g-1}$ vanishes on the complement of the moduli space of rational tails curves (parametrizing curves with one irreducible component of geometric genus $g$). Therefore, the only non vanishing contributions to the integral are the following.
\begin{itemize}
\item The trivial graph and its contribution is
\begin{equation}
\int_{\overline{\mathcal{H}}_{g}\left(-1,2g,-1\right)}\psi_{0}\lambda_{g}\lambda_{g-1}=I_{g}.
\end{equation}
\item The graph with central vertex of genus $0$ with three legs, connected by an edge to a genus $g$ satellite vertex with no marking. The genus $0$ vertex is decorated with the cycle of meromorphic differentials $\overline{\mathcal{H}}_{0}\left(-1,-1,2g,-2g\right)=\overline{\mathcal{M}}_{0,4}$, and the genus $g$ by the cycle of holomorphic differentials $\overline{\mathcal{H}}_{g}\left(2g-2\right)$. Its contribution is
\begin{equation}
\left(2g-1\right)\cdot\int_{\overline{\mathcal{M}}_{0,4}}\psi\cdot\int_{\overline{\mathcal{H}}_{g}\left(2g-2\right)}\lambda_{g}\lambda_{g-1}.
\end{equation}
\end{itemize}
Therefore, we obtain
\begin{equation}
I_{g}=\int_{{\rm DR}_{g}^{1}\left(-1,2g,-1\right)}\psi_{0}\lambda_{g}\lambda_{g-1}-\left(2g-1\right)\int_{\overline{\mathcal{H}}_{g}\left(2g-2\right)}\lambda_{g}\lambda_{g-1}.
\end{equation}

We use once again Conjecture A, this time to compute $\int_{\DRR g{2g-1,-1}}\lambda_{g}\lambda_{g-1}$. The contribution of the trivial graph is $\int_{\overline{\mathcal{H}}_{g}\left(2g-1,-1\right)}\lambda_{g}\lambda_{g-1}=0$, since the space of meromorphic differentials, having a single simple pole, is empty due to the residue condition. The only non vanishing contribution is given by the graph with a central vertex of genus $0$ and two legs connected by an edge to a genus $g$ satellite vertex with no leg. We find
\begin{equation}
\int_{\DRR g{2g-1,-1}}\lambda_{g}\lambda_{g-1}=\left(2g-1\right)\int_{\overline{\mathcal{H}}_{g}\left(2g-2\right)}\lambda_{g}\lambda_{g-1},
\end{equation}
so that
\begin{equation}
I_{g}=\int_{{\rm DR}_{g}^{1}\left(-1,2g,-1\right)}\psi_{0}\lambda_{g}\lambda_{g-1}-\int_{\DRR g{2g-1,-1}}\lambda_{g}\lambda_{g-1}.
\end{equation}

We have used so far the convention that the weights $m_{0},\dots,m_{n-1}$ of the cycle $\DRR g{m_{0},\dots,m_{n-1}}$ satisfy $\sum_{i=0}^{n-1}m_{i}=2g-2$. 

\medskip

\subsection{Step 2: Splitting formula for the twisted DR cycle}

In \cite[Proposition 3.1]{CSS}, Costantini, Sauvaget and Schmitt establish a splitting formula for the twisted DR cycle when intersected with a $\psi$-class. We apply this formula to get 
\begin{equation}
\int_{{\rm DR}_{g}^{1}\left(a-1,2g-a,-1\right)}\psi_{0}\lambda_{g}\lambda_{g-1}=\frac{2g}{a}\int_{\DRR g{-1,2g-1}}\lambda_{g}\lambda_{g-1}-\frac{\left(2g-a\right)}{a}\int_{\DRR g{a-1,2g-a-1}}\lambda_{g}\lambda_{g-1}.
\end{equation}
(the $a$ parameter in this formula is the ``logarithmic order'' at the first marking). Since the twisted DR cycle is polynomial \cite{spelier2024polynomiality,PixtonZagier}, the above equality is an equality of polynomials in $a$. Therefore, we rewrite $I_{g}$ as
\begin{align}
I_{g} & =\left.\left[\frac{2g}{a}\left(\int_{\DRR g{-1,2g-1}}\lambda_{g}\lambda_{g-1}-\int_{\DRR g{a-1,2g-1-a}}\lambda_{g}\lambda_{g-1}\right)\right]\right|_{a=0}\nonumber \\
 & =-2g{\rm Coef}_{a}\int_{\DRR g{a-1,2g-1-a}}\lambda_{g}\lambda_{g-1}.
\end{align}

\medskip

\subsection{Step 3: Pixton formula for the twisted DR cycle}

To compute the integral $I_{g}$, we apply the definition of the twisted DR cycle as a Pixton class or equivalently Hain's formula~\cite{hain2011normal}. Indeed, we use once again that $\lambda_{g}\lambda_{g-1}$ vanishes on the complement of the moduli space of rational tails curves. Except the trivial graph, there is only one rational tail contribution (one vertex of genus $g$ and one vertex genus $0$ with two points) but its linear contribution in $a$ vanishes. Therefore the only contribution to $I_{g}$ comes from the trivial graph and we get:
\begin{equation}
I_{g}=\frac{-2g}{2^{g}}{\rm Coef}_{a}\int_{\overline{\mathcal{M}}_{g,2}}\lambda_{g}\lambda_{g-1}\exp\left(-\kappa_{1}+a^{2}\psi_{0}+\left(2g-a\right)^{2}\psi_{1}\right)
\end{equation}
 Extracting the linear coefficient in $a$, we get
\begin{equation}
I_{g}=\frac{1}{2^{g-1}}\int_{\overline{\mathcal{M}}_{g,2}}\lambda_{g}\lambda_{g-1}\left(2g\right)^{2}\psi_{1}\exp\left(-\kappa_{1}+\left(2g\right)^{2}\psi_{1}\right).\label{eq: Ig kappa psi}
\end{equation}

\medskip

\subsection{Step 4: Faber intersection number conjecture}

First, by standard properties of the $\kappa$-classes (see for instance \cite[Lemma 2.3]{pixton2013tautological}), we obtain: 
\begin{align}
\exp\left(-\kappa_{1}t\right) & =1+\sum_{n\geq1}\left(\sum_{m\geq1}\sum_{\substack{i_{1}+\cdots+i_{m}=n\\
i_{1},\dots,i_{m}>0
}
}\frac{\left(-1\right)^{m}}{m!}\frac{\kappa_{i_{1},i_{2},\dots,i_{m}}}{i_{1}!\cdots i_{m}!}\right)t^{n},
\end{align}
where $\kappa_{i_{1},i_{2},\dots,i_{m}}=\pi_{*}\left(\psi_{n}^{i_{1}+1}\cdots\psi_{n+m-1}^{i_{m}+1}\right)$ and $\pi:\overline{\mathcal{M}}_{g,n+m}\rightarrow\overline{\mathcal{M}}_{g,n}$. Using this identity and expanding $\exp\left(\left(2g\right)^{2}\psi_{1}\right)$ in Eq.~(\ref{eq: Ig kappa psi}), we get
\begin{align}
I_{g} & =\frac{1}{2^{g-1}}\sum_{d=0}^{g-2}\sum_{m\geq1}\sum_{i_{1}+\cdots+i_{m}=g-1-d}\frac{1}{\prod_{j=1}^{m}i_{j}!}\frac{\left(-1\right)^{m}}{m!}\frac{\left(2g\right)^{2d+2}}{d!}\int_{\overline{\mathcal{M}}_{g,2}}\lambda_{g}\lambda_{g-1}\kappa_{i_{1},i_{2},\dots,i_{m}}\psi_{1}^{d+1}.\nonumber \\
 & +\frac{1}{2^{g-1}}\frac{\left(2g\right)^{2g}}{\left(g-1\right)!}\int_{\overline{\mathcal{M}}_{g,2}}\lambda_{g}\lambda_{g-1}\psi_{1}^{g}
\end{align}
We now compute the integral using the following lemma. 
\begin{lem}
[Faber intersection number formula]Suppose that $g\geq1$, $n\geq1$, $d\geq0$ and $i_{1},\dots,i_{m}\geq1$. We have
\begin{equation}
\int_{\overline{\mathcal{M}}_{g,2}}\lambda_{g}\lambda_{g-1}\psi_{0}^{0}\psi_{1}^{d+1}\kappa_{i_{1},\dots,i_{m}}=\frac{\left(2g-1+m\right)!\left(2g-1\right)!!}{\left(2g-1\right)!\left(2d+1\right)!!\prod_{i=1}^{m}\left(2i_{i}+1\right)!!}\cdot c_{g}
\end{equation}
where $c_{g}=\int_{\overline{\mathcal{M}}_{g,1}}\psi_{0}^{g-1}\lambda_{g}\lambda_{g-1}=\frac{1}{2^{2g-1}\left(2g-1\right)!!}\frac{\left|B_{2g}\right|}{2g}$. 
\end{lem}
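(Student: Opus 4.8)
This lemma is the Faber intersection number formula, so the plan is to reduce it to the pure--descendant form of that formula. Recall the $\lambda_g\lambda_{g-1}$ evaluation
\[
\int_{\overline{\mathcal{M}}_{g,n}}\psi_1^{d_1}\cdots\psi_n^{d_n}\lambda_g\lambda_{g-1}=\frac{(2g+n-3)!\,|B_{2g}|}{2^{2g-1}(2g)!\,\prod_{j=1}^{n}(2d_j-1)!!},\qquad\sum_{j=1}^{n}d_j=g+n-2
\]
(with the convention $(-1)!!=1$), which was conjectured by Faber and is now a theorem: it follows from the Virasoro constraints for a point and has been reproved by several authors. To pass to this form I would first convert the $\kappa$-classes into descendants. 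Let $p\colon\overline{\mathcal{M}}_{g,m+2}\to\overline{\mathcal{M}}_{g,2}$ be the morphism forgetting the last $m$ marked points, so that $\kappa_{i_1,\dots,i_m}=p_*\!\left(\psi_2^{i_1+1}\cdots\psi_{m+1}^{i_m+1}\right)$ by definition. Since $p^*\lambda_j=\lambda_j$, the projection formula gives
\[
\int_{\overline{\mathcal{M}}_{g,2}}\lambda_g\lambda_{g-1}\,\psi_1^{d+1}\,\kappa_{i_1,\dots,i_m}=\int_{\overline{\mathcal{M}}_{g,m+2}}\lambda_g\lambda_{g-1}\,(p^*\psi_1)^{d+1}\,\psi_2^{i_1+1}\cdots\psi_{m+1}^{i_m+1}.
\]

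\textbf{Removing the pullback correction.} The next step is to replace $p^*\psi_1$ by $\psi_1$ inside the integral. Writing $\psi_1=p^*\psi_1+\sum_{\emptyset\neq S\subseteq\{2,\dots,m+1\}}\delta_{0,\{1\}\cup S}$, every term of $(p^*\psi_1)^{d+1}-\psi_1^{d+1}$ carries a factor $\delta_{0,\{1\}\cup S}$, whose rational tail is $\overline{\mathcal{M}}_{0,|S|+2}$ and bears the markings $\{1\}\cup S$. On that tail the class $\prod_{j\in S}\psi_j^{i_j+1}$ already has degree $\sum_{j\in S}(i_j+1)\geq 2|S|>|S|-1=\dim\overline{\mathcal{M}}_{0,|S|+2}$, hence its restriction vanishes; so all these terms die once multiplied by $\psi_2^{i_1+1}\cdots\psi_{m+1}^{i_m+1}$. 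We are therefore left with a pure descendant integral over $\overline{\mathcal{M}}_{g,m+2}$ whose marked points $0,1,\dots,m+1$ carry the exponents $0,d+1,i_1+1,\dots,i_m+1$; their sum is $g+m=g+(m+2)-2$ because the dimension constraint forces $d+1+\sum_j i_j=g$, so the Faber formula applies on the nose.

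\textbf{Conclusion and the one delicate point.} Plugging in and using $(2(d+1)-1)!!=(2d+1)!!$, $(2(i_j+1)-1)!!=(2i_j+1)!!$, $(-1)!!=1$ and $(2g+(m+2)-3)!=(2g-1+m)!$, the Faber formula produces $\frac{(2g-1+m)!\,|B_{2g}|}{2^{2g-1}(2g)!\,(2d+1)!!\prod_j(2i_j+1)!!}$, and it remains to match this with $\frac{(2g-1+m)!(2g-1)!!}{(2g-1)!(2d+1)!!\prod_j(2i_j+1)!!}\,c_g$. This is immediate from $(2g)!=2g\,(2g-1)!$ together with $c_g=\frac{|B_{2g}|}{2^{2g-1}\,2g\,(2g-1)!!}$, the latter being exactly the $n=1,\ d_1=g-1$ instance of the Faber formula (equivalently, Faber's socle evaluation $\int_{\overline{\mathcal{M}}_g}\kappa_{g-2}\lambda_g\lambda_{g-1}$). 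The only step that needs genuine care is the vanishing of the pullback corrections in the middle paragraph: one must check that \emph{every} boundary contribution created by comparing $\psi_1$ with $p^*\psi_1$ is annihilated by the descendant classes sitting at the forgotten points, using only the dimensions of the rational tails. Everything else is either a citation of the Faber formula or elementary double-factorial bookkeeping. An alternative would be to invoke a mixed $\psi$--$\kappa$ version of the Faber formula directly from the literature, but I would favor the self-contained reduction above.
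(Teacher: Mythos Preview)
Your proposal is correct and follows essentially the same approach as the paper: convert $\kappa_{i_1,\dots,i_m}$ into descendants via the projection formula and the pull-back comparison for $\psi$-classes, then apply Faber's intersection number formula, noting that it remains valid when a single exponent is zero. The only difference is level of detail---you spell out the vanishing of the boundary corrections in $(p^*\psi_1)^{d+1}$ versus $\psi_1^{d+1}$ and handle the zero exponent via the $(-1)!!=1$ convention, whereas the paper invokes ``the pull-back property of $\psi$-classes'' in one line and defers the zero-exponent extension to a reference.
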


\begin{proof}
Using the pull-back property of $\psi$-classes, we get 
\[
\int_{\overline{\mathcal{M}}_{g,2}}\lambda_{g}\lambda_{g-1}\psi_{0}^{0}\psi_{1}^{d+1}\kappa_{i_{1},\dots,i_{m}}=\int_{\overline{\mathcal{M}}_{g,2+m}}\lambda_{g}\lambda_{g-1}\psi_{0}^{0}\psi_{1}^{d+1}\prod_{j=1}^{m}\psi_{j+1}^{i_{j}+1}.
\]
We then conclude using Faber's intersection number formula. Although the standard formulation of Faber's formula requires all $\psi$-class exponents to be positive, it still applies in the presence of a single exponent equal to zero. This extension is, for instance, apparent in the the proof of the formula given in \cite{blot2025faber}.
\end{proof}
Therefore, we obtain
\begin{align}
I_{g} & =\left|B_{2g}\right|\vast(\frac{1}{2^{3g-2}}\frac{1}{\left(2g-1\right)!}\sum_{d\geq0}^{g-2}\frac{\left(2g\right)^{2d+1}}{d!\left(2d+1\right)!!}\sum_{m\geq1}\sum_{\substack{i_{1}+\cdots+i_{m}=g-1-d\\
i_{1},\dots,i_{m}>0
}
}\frac{\left(-1\right)^{m}}{m!}\frac{\left(2g-1+m\right)!}{\prod_{j=1}^{m}i_{j}!\left(2i_{j}+1\right)!!}\nonumber \\
 & \qquad\qquad+\frac{1}{2^{3g-2}}\frac{\left(2g\right)^{2g-1}}{\left(g-1\right)!\left(2g-1\right)!!}\vast)
\end{align}
We denote by $J_{g}$ the expression in parenthesis. In the last step we establish that $J_{g}=1$, thereby completing the proof. 

\medskip

\subsection{Step 5: A combinatorial identity}

We use $\left(2l+1\right)!=\left(2l+1\right)!!\cdot l!\cdot2^{l}$ and re-organize the formula as follows
\begin{align}
J_{g} & =g\cdot\sum_{d=0}^{g-2}\frac{\left(2g\right)^{2d}}{\left(2d+1\right)!2^{2d}}\sum_{m\geq1}\sum_{\substack{i_{1}+\cdots+i_{m}=g-1-d\\
i_{1},\dots,i_{m}>0
}
}{2g-1+m \choose m}\frac{1}{\prod_{j=1}^{m}\left(-1\right)\left(2i_{j}+1\right)!2^{2i_{j}}}\nonumber \\
 & +g\cdot\frac{\left(2g\right)^{2g-2}}{\left(2g-1\right)!2^{2g-2}}.
\end{align}
Introducing $S\left(z\right)=\frac{\sinh\left(z/2\right)}{z/2}=\sum_{i\geq0}\frac{z^{2k}}{2^{2k}\left(2k+1\right)!}$, we obtain
\begin{align}
J_{g} & =g\left[z^{2g-2}\right]\sum_{m\geq0}{2g-1+m \choose m}S\left(2gz\right)\left(1-S\left(z\right)\right)^{m}.
\end{align}
Using $\sum_{m\geq0}{k+m \choose m}t^{m}=\left(\frac{1}{1-t}\right)^{k+1}$, we obtain
\begin{equation}
J_{g}=g\left[z^{2g-2}\right]\frac{S\left(2gz\right)}{\left(S\left(z\right)\right)^{2g}}=\frac{1}{2^{2g}}\left[1/z\right]\frac{\sinh\left(gz\right)}{\sinh\left(z/2\right)^{2g}}.
\end{equation}
Using $\frac{\sinh\left(2gz/2\right)}{\sinh\left(z/2\right)^{2g}}=\sum_{k=0}^{g-1}{2g \choose 2k+1}\coth^{2k+1}\left(z/2\right)$, we obtain 
\begin{equation}
J_{g}=\frac{1}{2^{2g}}\left[1/z\right]\sum_{k=0}^{g-1}{2g \choose 2k+1}\coth^{2k+1}\left(z/2\right).
\end{equation}

\begin{lem}
We have

\begin{equation}
\left[\frac{1}{x}\right]\coth^{2k+1}\left(x\right)=1,\quad k\geq0.
\end{equation}
\end{lem}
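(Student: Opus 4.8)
The plan is to establish the Laurent-coefficient identity $[1/x]\coth^{2k+1}(x)=1$ for all $k\geq 0$ by a generating-function/residue computation. Recall that $\coth(x)=\frac{1}{x}+\frac{x}{3}-\frac{x^3}{45}+\cdots$ has a simple pole at $x=0$ with residue $1$, so $\coth^{2k+1}(x)$ has a pole of order $2k+1$ there, and extracting the coefficient of $1/x$ is precisely taking the residue at $x=0$. Thus the claim is $\operatorname{Res}_{x=0}\coth^{2k+1}(x)\,dx=1$.

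First I would substitute $w=\coth(x)$. Since $\frac{dw}{dx}=-\operatorname{csch}^2(x)=1-\coth^2(x)=1-w^2$, we have $dx=\frac{dw}{1-w^2}$, so
\[
\operatorname{Res}_{x=0}\coth^{2k+1}(x)\,dx=\operatorname{Res}\left(\frac{w^{2k+1}}{1-w^2}\,dw\right),
\]
where the residue on the right is taken at the point corresponding to $x=0$, namely $w=\infty$ (as $x\to 0$, $\coth(x)\to\infty$). Changing variable once more via $w=1/t$, so $dw=-t^{-2}dt$, gives
\[
\frac{w^{2k+1}}{1-w^2}\,dw=\frac{t^{-2k-1}}{1-t^{-2}}\cdot(-t^{-2})\,dt=\frac{-t^{-2k-1}}{t^2-1}\,dt=\frac{t^{-2k-1}}{1-t^2}\,dt,
\]
and I would then read off the coefficient of $t^{-1}$, i.e. the residue at $t=0$: expanding $\frac{1}{1-t^2}=\sum_{j\geq 0}t^{2j}$ yields $\sum_{j\geq 0}t^{2j-2k-1}$, whose $t^{-1}$-coefficient is $1$ (attained at $j=k$). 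This proves the identity.

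The computation is elementary; the only point requiring a little care is the bookkeeping of \emph{which} residue one lands on after the substitution $w=\coth(x)$ — one must check that the branch of $x\mapsto\coth(x)$ near $x=0$ sends a small punctured disc to a neighborhood of $w=\infty$ and that the substitution is a local biholomorphism there, so that residues transform correctly. Alternatively, and perhaps more transparently, I would avoid the change of variables entirely and argue directly: writing $\coth(x)=\frac{1}{x}\cdot\frac{1}{h(x)}$ with $h(x)=\frac{x}{\tanh(x)}\Big/1$... more precisely $\coth^{2k+1}(x)=x^{-(2k+1)}\big(x\coth x\big)^{2k+1}$, and $x\coth x=1+\tfrac{x^2}{3}+\cdots$ is an even power series with constant term $1$; hence $\big(x\coth x\big)^{2k+1}$ is even, and $[1/x]\coth^{2k+1}(x)$ is the coefficient of $x^{2k}$ in the even series $\big(x\coth x\big)^{2k+1}$. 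One then recognizes this as $\operatorname{Res}_{w=\infty}\frac{w^{2k+1}}{w^2-1}dw=-[w^{-1}]\frac{w^{2k+1}}{w^2-1}$ after the same substitution, giving $1$; either route reaches the same conclusion, so I would present whichever is cleanest and note that this completes the evaluation of $J_g=1$ and hence of $I_g=|B_{2g}|$, proving Proposition~\ref{prop: Nice identity}.
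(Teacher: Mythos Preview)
Your residue argument is correct. The substitution $w=\coth(x)$ is a local biholomorphism from a punctured neighborhood of $x=0$ to a punctured neighborhood of $w=\infty$ (equivalently, $t=\tanh(x)=1/w$ is a coordinate at $0$ with $t'(0)=1$), so residues of $1$-forms are preserved, and your computation
\[
\operatorname{Res}_{x=0}\coth^{2k+1}(x)\,dx=\operatorname{Res}_{t=0}\frac{t^{-2k-1}}{1-t^{2}}\,dt=[t^{-1}]\sum_{j\ge 0}t^{2j-2k-1}=1
\]
is clean and complete. (Your second, ``alternative'' paragraph is a bit meandering and should be trimmed or dropped; the first route already proves the lemma.)

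This is a genuinely different route from the paper. The paper expands $\coth(x)=\sum_{n\ge 0}\frac{2^{2n}B_{2n}}{(2n)!}x^{2n-1}$, so that $[1/x]\coth^{2k+1}(x)$ becomes the multinomial convolution
\[
\frac{2^{2k}}{(2k)!}\sum_{n_{1}+\cdots+n_{2k+1}=k}\binom{2k}{2n_{1},\dots,2n_{2k+1}}B_{2n_{1}}\cdots B_{2n_{2k+1}},
\]
and then invokes a Bernoulli-number identity of Dilcher to evaluate this as $1$. Your argument is more elementary and fully self-contained: it sidesteps Bernoulli numbers entirely, so no external reference is needed. The trade-off is that the paper's computation, while less direct here, makes explicit an interesting Bernoulli-number identity that your approach hides; but for the purpose of establishing the lemma, your residue calculation is the cleaner proof.
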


\begin{proof}
We use
\begin{equation}
\coth\left(x\right)=\sum_{n=0}^{\infty}\frac{2^{2n}B_{2n}x^{2n-1}}{\left(2n\right)!}
\end{equation}
to obtain
\begin{equation}
\left[\frac{1}{x}\right]\coth^{2k+1}\left(x\right)=\frac{2^{2k}}{\left(2k\right)!}\sum_{n_{1}+\cdots+n_{2k+1}=k}{2k \choose 2n_{1},\dots,2n_{2k+1}}B_{2n_{1}}\cdots B_{2n_{2k+1}}.
\end{equation}
The statement follows from \cite[Theorem 2b]{Dilcher}. 
\end{proof}
Therefore, we conclude that
\begin{equation}
J_{g}=\frac{1}{2^{2g}}\sum_{k=0}^{g-1}{2g \choose 2k+1}2=\frac{1}{2^{2g}}\frac{\left(1+1\right)^{2g}}{2}2=1.
\end{equation}

\bibliographystyle{alpha}
\bibliography{MD-Hodge}

\end{document}